\newtheorem{theorem}{Theorem}
\newtheorem{proposition}[theorem]{Proposition} 
\theoremstyle{definition} 
\newtheorem{example}[theorem]{Example} 
\newtheorem{remark}[theorem]{Remark} 
\newtheorem{algorithm}[theorem]{Algorithm} 
 \newcommand{\ellx}{{\ell^{(1)}}}    % {{\ell_{1,0}}  
 \newcommand{\elly}{{\ell^{(2)}}}    % {{\ell_{0,1}}  
 \newcommand{\hx}{{h^{(1)}}}         % {{h_{1,0}}} 
 \newcommand{\hy}{{h^{(2)}}}         % {{h_{0,1}}} 
  \newcommand{\ellxPrime}{{k^{(1)}}} % { \ell'_{1,0}(x)} 
  \newcommand{\ellyPrime}{{k^{(2)}}} % { \ell'_{0,1}(y)} 
  \newcommand{\MOne}{{M_{(1)}}} % {M_1} 
  \newcommand{\MTwo}{{M_{(2)}}} % {M_2} 
  \newcommand{\Mi}{{M_{(i)}}} % {M_i} 
  \newcommand{\MOnePrime}{{M_{(1)}'}} % {M_1'} 
  \newcommand{\MTwoPrime}{{M_{(2)}'}} % {M_2'} 
  \newcommand{\MiPrime}{{M_{(i)}'}}   % {M_i'} 
\newcommand{\ratto}{\dashedrightarrow} 
\newcommand{\V}[1]{{\mathbb{V} \! \left(#1\right)}} % variety 
\newcommand{\CC}{{\mathbb{C}}} 
\newcommand{\PP}{{\mathbb{P}}}
\DeclareMathOperator{\trace}{tr} 
\DeclareMathOperator{\Proj}{Proj}
\newcommand{\defcolor}[1]{{\color{blue}#1}} 
\newcommand{\demph}[1]{\defcolor{{\sl #1}}}
\title{Trace test} % done right 
\author[A.~Leykin]{Anton Leykin} 
\address{Anton Leykin\\
         School of Mathematics\\ 
         Georgia Institute of Technology\\ 
         686 Cherry Street\\ 
         Atlanta, GA 30332-0160 USA\\ 
         USA} 
\email{leykin@math.gatech.edu} 
\urladdr{http://people.math.gatech.edu/~aleykin3} 
\author[J.~I.~Rodriguez]{Jose Israel Rodriguez}
\address{Jose Israel Rodriguez\\
         Department of Statistics\\
         University of Chicago\\
         Chicago, IL 60637\\         
         USA}
\email{JoIsRo@uchicago.edu}
\urladdr{http://home.uchicago.edu/\~joisro}
\author[F.~Sottile]{Frank Sottile} 
\address{Frank Sottile\\ 
         Department of Mathematics\\ 
         Texas A\&M University\\ 
         College Station\\ 
         Texas \ 77843\\ 
         USA} 
\email{sottile@math.tamu.edu} 
\urladdr{http://www.math.tamu.edu/~sottile} 
\thanks{Research of Leykin supported in part by NSF grant DMS-1151297} 
\thanks{Research of Rodriguez supported in part by NSF grant DMS-1402545}         
\thanks{Research of Sottile supported in part by NSF grant DMS-1501370} 
\subjclass{65H10} 
\keywords{trace test, witness set, numerical algebraic geometry} 
\begin{document} 
 
%%%%%%%%%%%%%%%%%%%%%%%%%%%%%%%%%%%%%%%%%%%%%%%%%%%%%%%%%%%%%%%%%%%%%%%%%%%% 
\begin{abstract} 
The trace test in numerical algebraic geometry verifies the completeness of a witness
set of an irreducible variety in affine or projective space. 
We give a brief derivation of the trace test and then consider it for subvarieties of products of projective spaces using multihomogeneous witness sets. 
We show how a dimension reduction %based on Bertini's Theorem 
leads to a practical trace test in this case involving a curve in a low-dimensional affine space. 
\end{abstract} 
%%%%%%%%%%%%%%%%%%%%%%%%%%%%%%%%%%%%%%%%%%%%%%%%%%%%%%%%%%%%%%%%%%%%%%%%%%%% 
\maketitle 
%%%%%%%%%%%%%%%%%%%%%%%%%%%%%%%%%%%%%%%%%%%%%%%%%%%%%%%%%%%%%%%%%%%%%%%%%%%% 
 
%%%%%%%%%%%%%%%%%%%%%%%%%%%%%%%%%%%%%%%%%%%%%%%%%%%%%%%%%%%%%%%%%%%%%%%%%%%% 
\section*{Introduction} 
Numerical algebraic geometry~\cite{SW05} uses numerical analysis to study algebraic varieties, which are sets defined by
polynomial equations.
It is becoming a core tool in applications of algebraic geometry outside of mathematics.   
Its fundamental concept is a witness set, which is a 
general linear section of an algebraic variety~\cite{SV}. 
This gives a representation of a variety which may be manipulated on a computer and forms the basis for many
algorithms.
The trace test is used to verify that a witness set is complete. 

We illustrate this with the folium of Descartes, defined by $x^3+y^3=3xy$.
A general line $\ell$ meets the folium in three points $W$ and the pair $(W,\ell)$ forms a \demph{witness set} for
the folium.
Tracking the points of $W$ as $\ell$ moves computes witness sets on other lines.
Figure~\ref{F:folium} shows these witness sets on four parallel lines.
%%%%%%%%%%%%%%%%%%%%%%%%%%%%%%%%%%%%%%%%%%%%%%%%%%%%%%%%%%%%%%%%%%%%%%%%%%%%%%%%%
\begin{figure}[htb]
\[
   \begin{picture}(182,92)%(-30,0)
     \put(0,0){\includegraphics{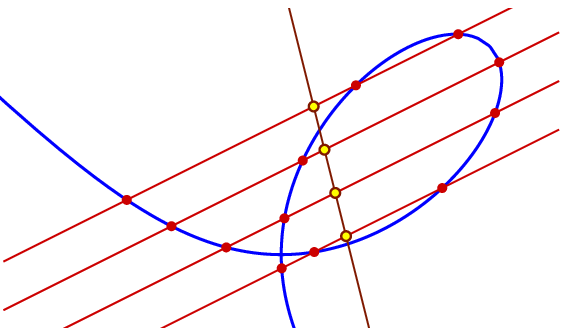}}
     \put(-30,72){$x^3+y^3=3xy$}
     \put(163,53){$\ell$}
     \put(125,7){collinear traces}
     \put(123,10){\vector(-1,0){16}}
    \end{picture}
\]
\caption{Witness sets and the trace test for the folium of Descartes}
\label{F:folium}
\end{figure}
%%%%%%%%%%%%%%%%%%%%%%%%%%%%%%%%%%%%%%%%%%%%%%%%%%%%%%%%%%%%%%%%%%%%%%%%%%%%%%%%%
It also shows the average of each witness set, which is one-third of their sum, the \demph{trace}.
The four traces are collinear.

Any subset $W'$ of $W$ may be tracked to get a corresponding subset on any other line, and we may
consider the traces of the subsets as $\ell$ moves in a pencil.
The traces are collinear if and only if $W'$ is \demph{complete} in that $W'=W$. 
This may also be seen in Figure~\ref{F:folium}.
This \demph{trace test}~\cite{SVW_trace} is used to verify the completeness of a subset of a witness set.

Methods to check linearity of a univariate function---e.g., the trace---in the context of
algorithms for numerical algebraic geometry were recently discussed in~\cite{BHL16}. 

An algebraic variety $V$ may be the union of other varieties, called its components.
Given a witness set $W=V\cap L$ for $V$ ($L$ is a linear space), \demph{numerical irreducible
  decomposition}~\cite{SVW_decomposition} partitions $W$ into subsets corresponding to the 
components of $V$.
For example, suppose that $V=E\cup F$ is the union of the ellipse $8(x+1)^2+ 3(2y+x+1)^2 = 8$ and the folium, as in
Figure~\ref{F:folium_ellipse}.
%%%%%%%%%%%%%%%%%%%%%%%%%%%%%%%%%%%%%%%%%%%%%%%%%%%%%%%%%%%%%%%%%%%%%%%%%%%%%%%%%
\begin{figure}[htb]
\[
   \begin{picture}(192,108)%(-30,0)
     \put(0,0){\includegraphics{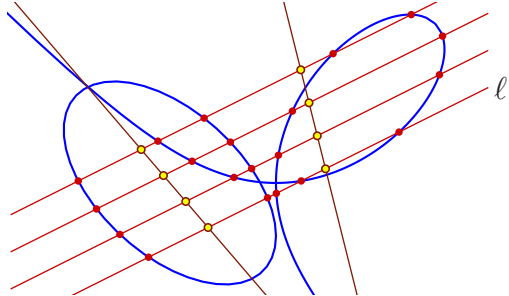}}
     \put(184,74){$\ell$}
%     \put(-30,72){$x^3+y^3=3xy$}
%     \put(163,52){$\ell$}
%     \put(125,7){collinear traces}
%     \put(123,10){\vector(-1,0){16}}
    \end{picture}
\]
\caption{Numerical irreducible decomposition for the ellipse and folium}
\label{F:folium_ellipse}
\end{figure}
%%%%%%%%%%%%%%%%%%%%%%%%%%%%%%%%%%%%%%%%%%%%%%%%%%%%%%%%%%%%%%%%%%%%%%%%%%%%%%%%%
A witness set for $V$ consists of the five points $W=V\cap\ell$.
Tracking points of $W$ as $\ell$ varies in a loop in the space of lines, a point $w\in W$ may move to a
different point which lies in the same component of $V$.
Doing this for several loops partitions $W$ into two sets, of cardinalities two and three, respectively.
Applying the trace test to each subset verifies that each is a witness set of a component of $V$.
\smallskip

A multiprojective variety is subvariety of a product of projective spaces.
Since there are different types of general linear sections in a product of projective spaces,
a witness set for a  multiprojective variety is necessarily a collection of such sections, called a witness collection. 
We see this in Figure~\ref{F:bilinear_noGreen}, where vertical and horizontal lines are the two types of hyperplanes in 
the product $\PP^1\times\PP^1$. 
%%%%%%%%%%%%%%%%%%%%%%%%%%%%%%%%%%%%%%%%%%%%%%%%%%%%%%%%%%%%%%%%%%%%%%%%%%%%%%%%%
\begin{figure}[htb]
  \begin{picture}(183,148)(-23,0)
   \put(0,0){\includegraphics[height=150pt]{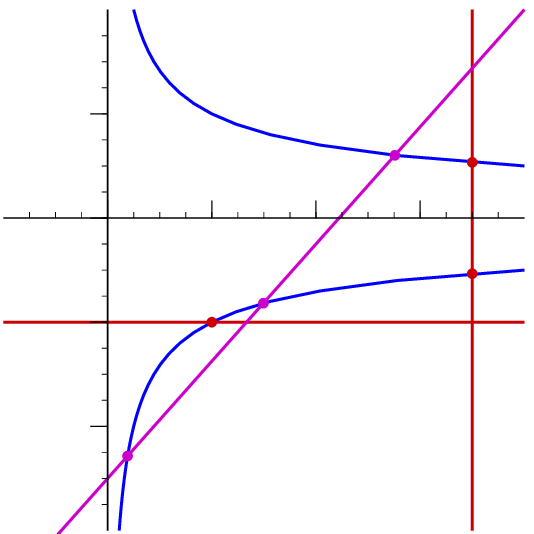}}
   \put(10,26.5){$-2$}   \put(18,116){$1$}  \put(18,145){$y$}
   \put(57,97){$1$} \put(87,97){$2$}  \put(142,94){$x$}
   \put(-11,62){$\elly$} 
   \put(55,35){$\ell$}
   \put(137,27){$\ellx$}  \put(152,71){$C$} \put(152,102){$C$}
  \end{picture}
   \qquad
  \begin{picture}(183,148)(-23,0)
   \put(0,0){\includegraphics[height=150pt]{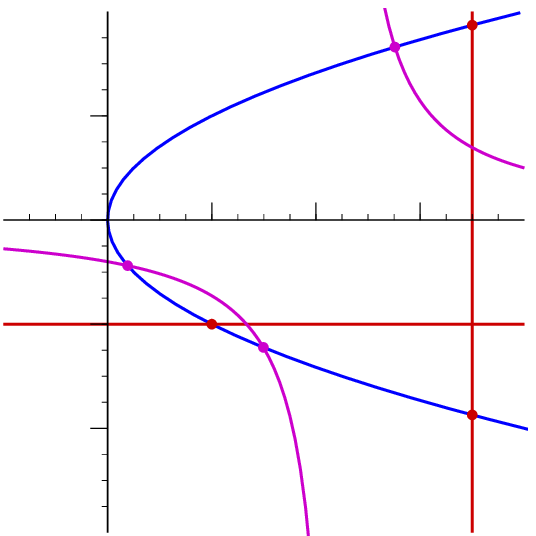}}
   \put(10,26.5){$-2$}   \put(18,116){$1$}  \put(0,145){$z=\frac{1}{y}$}
   \put(57,97){$1$} \put(87,97){$2$} \put(117,97){$3$}  \put(146,94){$x$}
   \put(-11,62){$\elly$} 
   \put(77,15){$\ell$} \put(110,116){$\ell$}
   \put(137,8){$\ellx$}  \put(78,132){$C$}
  \end{picture}
 \caption{The curve $C$ in $\PP^1\times\PP^1$ is defined by  $xy^2=1$ and $x = z^2=(1/y)^2$ in two distinct affine
   charts $\CC\times\CC$. 
    Its witness collection is $((W_1,\ell^{(1)}),(W_2,\ell^{(2)}))$, where $W_i=C\cap\ell^{(i)}$.}
 \label{F:bilinear_noGreen}
\end{figure}
%%%%%%%%%%%%%%%%%%%%%%%%%%%%%%%%%%%%%%%%%%%%%%%%%%%%%%%%%%%%%%%%%%%%%%%%%%%%%%%%%

Witness sets for multihomogeneous varieties were introduced in~\cite{HR15}. 
Figure~\ref{F:bilinear_noGreen} shows that the trace obtained by varying $\ell^{(2)}$ is nonlinear in either 
affine chart.  
One may instead apply the trace test to a witness set 
in the ambient projective space of the Segre embedding.  
By Remark~\ref{remark:Segre}, this may involve very large witness sets. 
We propose an alternative method to verify irreducible components, using a dimension reduction that sidesteps this potential
bottleneck followed by  the ordinary trace test in an affine patch on the product of projective spaces. 
In Figure~\ref{F:bilinear_noGreen} this is represented by the linear section of the plane cubic $xy^2=1$ by the line
$\ell$. 
Both $xy^2=1$ and $x=z^2=(\frac{1}{y})^2$ bihomogenize to the same cubic, but line $\ell$ in the first affine chart becomes a quadric in the second. Moreover, a general line in the second chart intersects the curve at two points. 
Taking a generic chart preserves the total degree, so we first choose a chart, and then
take a general linear section in that chart.
This will have the same number of points as the the total number of points in the witness collection.
 
In \S\ref{S:affine} we present a simple derivation of the usual trace test in affine space. 
While containing the same essential ideas as in~\cite{SVW_trace}, our derivation is shorter, 
and we believe significantly clearer. 
In \S\ref{S:product} we introduce witness collections, collections of multihomogeneous witness sets representing multiprojective varieties.
In \S\ref{S:dimension} we present 
trace test for multihomogeneous varieties that exploits a 
reduction in dimension. 
Proofs are placed in \S\ref{S:proofs} to streamline the exposition.

%%%%%%%%%%%%%%%%%%%%%%%%%%%%%%%%%%%%%%%%%%%%%%%%%%%%%%%%%%%%%%%%%%%%%%%%%%%%%%%%% 
\section{Trace in an affine space}\label{S:affine}   

We derive the trace test for curves in affine space, which verifies the completeness of a witness set.
We also show how to reduce to a curve when the variety has greater dimension.
Let $\defcolor{V}\subset\CC^n$ be an irreducible algebraic variety of dimension $m>0$. 
We restrict to $m>0$, for if $m=0$, then $V$ is a single point. 
Let $(x,y)$ be coordinates for $\CC^n$ with $x\in\CC^{n-m}$ and $y\in\CC^m$. 
Polynomials defining $V$ generate a prime ideal $I$ in the polynomial ring $\CC[x,y]$. 
We assume that $V$ is in general position with respect to these coordinates. 
In particular, the projection $\pi$ of $V$ to $\CC^m$ is a branched cover with a fiber of 
$\defcolor{d}=\deg V$ points outside the ramification locus $\defcolor{\Delta}\subset\CC^m$. 

%%%%%%%%%%%%%%%%%%%%%%%%%%%%%%%%%%%%%%%%%%%%%%%%%%%%%%%%%%%%%%%%%%%%%%%%%%%%%%%%% 
\begin{example}\label{ex:projectFolium}
 If we project the folium of Descartes to the $y$-axis, all fibers consist of three points, except those above 
 zeroes of the discriminant $-27y^3(y^3-4)$.
 These zeroes form the ramification locus
 $\Delta=\{0,2^{2/3}, (-\frac{1}{2}\pm\frac{\sqrt{-3}}{2})2^{2/3}\}$.
%%%%%%%%%%%%%%%%%%%%%%%%%%%%%%%%%%%%%%%%%%%%%%%%%%%%%%%%%%%%%%%%%%%%%%%%%%%%%%%%%
\begin{figure}[htb]
  \begin{picture}(216,112)
   \put(0,0){\includegraphics{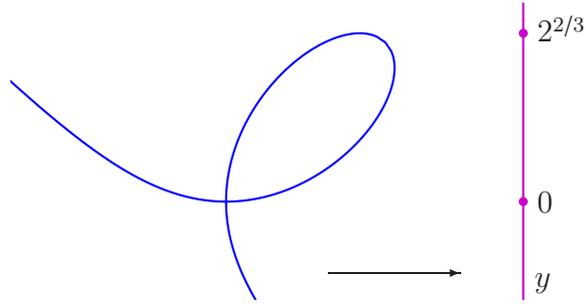}}
   \put(120,10){\vector(1,0){50}}
   \put(199,33){$0$}   \put(199,98){$2^{2/3}$}
   \put(198,5){$y$}
  \end{picture}
  \caption{Projecting the folium to the $y$-axis.}
 \label{F:Branch_Locus}
\end{figure}
%%%%%%%%%%%%%%%%%%%%%%%%%%%%%%%%%%%%%%%%%%%%%%%%%%%%%%%%%%%%%%%%%%%%%%%%%%%%%%%%%
 Figure~\ref{F:Branch_Locus} shows the real points, where the fiber consists of one or three points, with this number
 changing at the real points of~$\Delta$.  
\end{example}
%%%%%%%%%%%%%%%%%%%%%%%%%%%%%%%%%%%%%%%%%%%%%%%%%%%%%%%%%%%%%%%%%%%%%%%%%%%%%%%%%
 
Let $\defcolor{\ell}\subset\CC^m$ be a general line parameterized by $t\in\CC$, so that 
$\defcolor{L}:=\CC^{n-m}\times\ell$ is a general affine subspace of dimension $n{-}m{+}1$ with coordinates 
$(x,t)$. 
The intersection $\defcolor{C}:=V\cap L$ is an irreducible curve of degree $d$ by Bertini's 
Theorem~(see Theorem~\ref{thm:bertini}) and the projection $\pi\,\colon C\to\ell$ is a degree $d$ cover over 
$\ell\smallsetminus\Delta$.

%%%%%%%%%%%%%%%%%%%%%%%%%%%%%%%%%%%%%%%%%%%%%%%%%%%%%%%%%%%%%%%%%%%%%%%%%%%%%%%%% 
 \iffalse
\begin{example}\label{ex:circle}
{\color{blue}
Let $V$ denote the sphere defined by $x^2+y^2+z^2=1$.
The branch locus $\triangle$ of the projection $\pi$ of $V$ to the $yz$-plane is the circle $y^2+z^2=1$.  
The line $\ell\subset\CC_{y,z}^2$ induces a hyperplane $L$ in $\CC^3_{x,y,z}$.  The intersection $C:=L\cap V$ is a circle.
%If $\alpha$ is a generic projection, then $\alpha(C)$ is irreducible (but need not be a circle, e.g. $\alpha(C)$ may be an ellipse).
%More importantly, according to Prop.~\ref{prop:generic-projection-to-P2}, we are able to determine irreducibility of $C$ by determining irreducibility of $\alpha(C)$.
}
{\color{red} Include picture?}
\end{example}
\fi
%%%%%%%%%%%%%%%%%%%%%%%%%%%%%%%%%%%%%%%%%%%%%%%%%%%%%%%%%%%%%%%%%%%%%%%%%%%%%%%%% 

%%%%%%%%%%%%%%%%%%%%%%%%%%%%%%%%%%%%%%%%%%%%%%%%%%%%%%%%%%%%%%%%%%%%%%%%%%%%%%%%% 
\begin{proposition}\label{prop:generic-projection-to-P2} 
 Let $C\subset \PP^n$, $n\geq 2$, be a curve. Let $\alpha\,\colon\PP^n\ratto \PP^2$ be a generic projection. 
 Then $C$ is irreducible if and only if $\alpha(C)$ is irreducible. 
\end{proposition}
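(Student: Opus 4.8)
The plan is to prove both implications, with the forward direction (irreducibility of $C$ implies irreducibility of $\alpha(C)$) being essentially formal and the reverse direction being the substantive one. For the forward direction: a generic projection $\alpha$ restricted to $C$ is a dominant morphism $C \to \alpha(C)$ (after removing the base locus, which a generic center of projection misses since $\dim C = 1 \le n-2$ for $n \ge 3$, and for $n=2$ the statement is trivial with $\alpha = \id$). The continuous image of an irreducible set is irreducible, so $\alpha(C)$ is irreducible when $C$ is.

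For the reverse direction, I would argue by contrapositive: suppose $C = C_1 \cup C_2$ is a nontrivial decomposition into (positive-dimensional, i.e. curve) components. I want to show that for a generic projection $\alpha$, the images $\alpha(C_1)$ and $\alpha(C_2)$ are distinct curves in $\PP^2$, so that $\alpha(C) = \alpha(C_1) \cup \alpha(C_2)$ is reducible. First, $\alpha(C_i)$ is a curve (not a point): a generic projection to $\PP^2$ is finite on any fixed curve — the center of projection is a generic $\PP^{n-3}$, which generically misses the finitely many secant/exceptional directions that would collapse a positive-dimensional piece, so $\alpha|_{C_i}$ is generically finite onto its image, making $\alpha(C_i)$ a curve. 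Then I need $\alpha(C_1) \ne \alpha(C_2)$. Pick a point $p_1 \in C_1 \setminus C_2$ and $p_2 \in C_2 \setminus C_1$. The condition that $\alpha(p_1)$ lies on the curve $\alpha(C_2)$ is a closed condition on the center of projection; I must check it is not satisfied for a generic center. Equivalently, I can use a dimension count: the set of centers $\Lambda \cong \PP^{n-3} \subset \PP^n$ for which $\alpha_\Lambda(C_1) \subseteq \alpha_\Lambda(C_2)$, i.e. for which every point of $C_1$ is joined to some point of $C_2$ by a line meeting $\Lambda$, forms a proper subvariety of the Grassmannian of centers, because the union of all lines through $C_1$ and $C_2$ (the join of $C_1$ and $C_2$) has dimension at most $3 < n$ when $n \ge 4$, so a generic $\Lambda$ of dimension $n-3$ meets this join in dimension at most $0$, hence cannot force the containment; the cases $n = 2, 3$ are handled directly.

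The main obstacle is the reverse direction's genericity argument: making precise that a generic projection does not accidentally identify the two image curves, i.e. that $\alpha(C_1) = \alpha(C_2)$ fails generically. The cleanest route is probably to produce, for generic $\Lambda$, a single point of $C_1$ whose image avoids $\alpha(C_2)$; this reduces to showing that the join of the point-set $C_1 \setminus C_2$ with $C_2$ does not fill enough of $\PP^n$ to force every relevant line to meet a generic center $\Lambda$. One must be slightly careful about the low-dimensional ambient cases and about the possibility that $C_1$ and $C_2$ share points (which is why one passes to $C_i \setminus C_j$), but the dimension bound $\dim(\mathrm{Join}(C_1, C_2)) \le \dim C_1 + \dim C_2 + 1 = 3$ does the work whenever $n \ge 4$, and $n = 2, 3$ are finitely many explicit cases. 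An alternative, perhaps slicker, framing: reduce immediately to the case of a projection $\PP^n \ratto \PP^{n-1}$ from a single generic point, show the statement is preserved (image of reducible is reducible, image of irreducible is irreducible, for a generic such projection), and induct down to $\PP^2$; each single step only requires that a generic point of $\PP^n$ does not lie on the join of $C_1$ and $C_2$, which is immediate from the dimension bound since $\dim \mathrm{Join}(C_1,C_2) \le 3 \le n-1$ exactly when $n \ge 4$, again leaving $n=3$ (project from a generic point off the finitely many lines spanned by... ) as a small base case before reaching $\PP^2$.
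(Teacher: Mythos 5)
Your approach is genuinely different from the paper's and, once tightened, it works. The paper proves more than the stated equivalence: it shows that projection from a generic point is \emph{birational} from $C$ onto its image (one-to-one off a finite set), by analyzing the incidence variety of collinear triples $(p,c,c')$ and splitting into the cases where the secant variety of $C$ is or is not dense in $\PP^3$; both irreducibility and reducibility are then preserved at each step of the descent to $\PP^2$. (The birationality is not gratuitous --- the paper uses it immediately afterwards to conclude that the projected plane curve is cut out by a single polynomial of the same degree $d$.) You prove only the stated equivalence, splitting into the easy forward direction and a contrapositive for the converse: distinct curve components have distinct image curves under a generic projection. Executed via your ``cleanest route,'' this is arguably simpler than the paper's argument for the proposition as stated: fix $p_1\in C_1\setminus C_2$ and observe that the cone over $C_2$ with vertex $p_1$ has dimension $2$, so a generic center $\Lambda\simeq\PP^{n-3}$ misses it entirely (since $(n-3)+2=n-1<n$), whence $\alpha(p_1)\notin\alpha(C_2)$ and $\alpha(C_1)\neq\alpha(C_2)$. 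Note that this count works uniformly for all $n\geq 3$, so the separate $n=3$ base case you leave dangling (``off the finitely many lines spanned by\dots'') is unnecessary.

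Two soft spots you should repair. First, the variant of your argument that goes through the full join $\mathrm{Join}(C_1,C_2)$ is logically gappy as written: knowing that a generic $\Lambda$ meets this (at most $3$-dimensional) join in dimension $\leq 0$ does not by itself rule out $\alpha(C_1)\subseteq\alpha(C_2)$, because a single point $q\in\Lambda\cap\mathrm{Join}(C_1,C_2)$ could in principle be the vertex of a cone over $C_2$ containing all of $C_1$; one needs either the fixed-point cone count above or the paper's finiteness-of-fibers argument for the incidence variety. Second, your justification that each $\alpha(C_i)$ remains a curve (``misses the finitely many secant/exceptional directions'') is not quite right: $C_i$ is collapsed to a point exactly when $\Lambda$ together with $C_i$ spans at most an $(n-2)$-plane, and this fails for generic $\Lambda$ by a span-dimension count, not by avoiding finitely many lines. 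Neither issue is fatal, but the proof is not complete until both are fixed.
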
 
%%%%%%%%%%%%%%%%%%%%%%%%%%%%%%%%%%%%%%%%%%%%%%%%%%%%%%%%%%%%%%%%%%%%%%%%%%%%%%%%% 
Note that one can state Proposition~\ref{prop:generic-projection-to-P2} for a generic linear map of affine spaces $\CC^n\to\CC^2$, since taking affine charts preserves (ir)reducibility.
 
Since $V$ and $L$ are in general position, Proposition~\ref{prop:generic-projection-to-P2} implies that 
the projection of $C$ to the  $(x_i,t)$-coordinate plane 
is an irreducible curve given by a single polynomial \defcolor{$f(x_i,t)$} of degree $d$ with all 
monomials up to degree $d$ having nonzero coefficients.

 %%%%%%%%%%%%%%%%%%%%%%%%%%%%%%%%%%%%%%%%%%%%%%%%%%%%%%%%%%%%%%%%%%%%%%%%%%%%%%%%% 
\iffalse
 \begin{example}\label{ex:ellipse}
{\color{blue}
%According to Prop.~\ref{prop:generic-projection-to-P2}, we are able to determine irreducibility of $C$ by determining irreducibility of $\alpha(C)$.
Continuing Ex.~\ref{ex:circle}, we let $\alpha:\CC^3\to\CC^2$ denote a general projection. Then $\alpha(C)$ is an ellipse and irreducible.
According to Prop.~\ref{prop:generic-projection-to-P2}, the curve $C$ is also irreducible. 
%Likewise, we see this with the folium.
}
{\color{red} Include picture to illustrate this idea?}
\end{example}
\fi
%%%%%%%%%%%%%%%%%%%%%%%%%%%%%%%%%%%%%%%%%%%%%%%%%%%%%%%%%%%%%%%%%%%%%%%%%%%%%%%%% 

Normalize $f$ so that the coefficient of $x_i^d$ is $1$, and extend scalars from $\CC$ to $\CC(t)$. 
Then $f\in \CC(t)[x_i]$ is a monic irreducible polynomial in $x_i$. 
The negative sum of its roots is the coefficient of $x_i^{d-1}$ in $f$, which is an affine function 
of $t$. 
Equivalently, 
 \[ 
   \trace_{K/\CC(t)}(x_i)\ =\ c_0 t+c_1\,, \quad(\mbox{for some }c_0,c_1\in \CC)\,, 
 \] 
where $K$ is a finite extension of $\CC(t)$ containing the roots of $f$. 
A function of $t$ of the form $c_0 t+c_1$ where $c_0,c_1$ are constants is an 
\demph{affine function}. 
We deduce the following. 
 
%%%%%%%%%%%%%%%%%%%%%%%%%%%%%%%%%%%%%%%%%%%%%%%%%%%%%%%%%%%%%%%%%%%%%%%%%%%%%%%%% 
\begin{proposition}\label{Prop:one} 
 The sum in $\CC^{n-m}$ of the points in a fiber of $C$ over $t\in\ell\smallsetminus\Delta$ is an affine 
 function of $t$. 
\end{proposition}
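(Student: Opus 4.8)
The plan is to reduce the statement about a fiber in $\CC^{n-m}$ to the coordinatewise statement already established for the single-variable polynomials $f(x_i,t)$. Fix a generic value $t\in\ell\smallsetminus\Delta$, so the fiber $\pi^{-1}(t)\cap C$ consists of exactly $d$ distinct points $p_1(t),\dots,p_d(t)\in\CC^{n-m}$, with $j$-th coordinates $p_{1,j}(t),\dots,p_{d,j}(t)$ for $1\le j\le n-m$. I want to show the vector sum $\sum_{k=1}^d p_k(t)$ depends affinely on $t$, and it suffices to prove each coordinate $\sum_{k=1}^d p_{k,j}(t)$ is an affine function of $t$.

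First I would fix an index $j$ and invoke the paragraph preceding the statement: the projection of $C$ to the $(x_j,t)$-plane is the irreducible plane curve $\V{f(x_j,t)}$ with $f$ monic of degree $d$ in $x_j$ after normalization. Because $t$ is outside $\Delta$ (and the finitely many additional points where this projection is not unramified or fails to be a bijection onto its own fiber — all of which we absorb into a suitably enlarged $\Delta$, using that $V$ is in general position), the $d$ roots of $f(x_j,t)=0$ in $x_j$ are exactly the coordinates $p_{1,j}(t),\dots,p_{d,j}(t)$, each with multiplicity one. Then by the elementary symmetric function / Vieta relation, $\sum_{k=1}^d p_{k,j}(t)$ equals minus the coefficient of $x_j^{d-1}$ in $f(x_j,t)$. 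The discussion just above the statement identifies this coefficient: extending scalars to $\CC(t)$, $f\in\CC(t)[x_j]$ is monic irreducible, and the negated coefficient of $x_j^{d-1}$ is $\trace_{K/\CC(t)}(x_j)=c_0t+c_1$ for constants $c_0,c_1$. Hence $\sum_{k=1}^d p_{k,j}(t)=c_0t+c_1$ is an affine function of $t$.

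Running this over all $j=1,\dots,n-m$ gives that each coordinate of the vector $\sum_{k=1}^d p_k(t)$ is affine in $t$, so the vector-valued function $t\mapsto\sum_{k=1}^d p_k(t)$ is an affine function $\CC\to\CC^{n-m}$, which is the claim. It remains to note that the coefficient of $x_j^{d-1}$ in $f$ is genuinely a polynomial expression in $t$ of degree at most $1$ on the whole line, not merely near generic $t$: this follows because that coefficient is the coefficient of $x_j^{d-1}$ in a polynomial that, by the cited properties, has all monomials of degree up to $d$ and total degree exactly $d$, so the coefficient of $x_j^{d-1}$ is a polynomial in $t$ of degree at most $1$.

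The main obstacle is bookkeeping about the exceptional locus: I must make sure that for $t\notin\Delta$ the $d$ points of the fiber of $C$ over $t$ map bijectively (and unramifiedly) to the $d$ roots of $f(x_j,t)$, i.e. that distinct points of the fiber have distinct $x_j$-coordinates and that none is a multiple root. Since $\alpha\colon\PP^n\ratto\PP^2$ (equivalently the affine projection to the $(x_j,t)$-plane) is generic and $V,L$ are in general position, this holds away from a proper subvariety of $\ell$; enlarging $\Delta$ to include these finitely many bad parameters causes no loss, since the conclusion is an identity of polynomials in $t$ and hence is determined by its values on any Zariski-dense subset of $\ell$. Everything else is the standard identification of the first symmetric function of the roots with a coefficient, which has already been done in the text.
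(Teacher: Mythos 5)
Your proposal is correct and follows essentially the same route as the paper: project $C$ to each $(x_j,t)$-coordinate plane, normalize the irreducible defining polynomial $f(x_j,t)$ to be monic of degree $d$ in $x_j$, and read off the sum of the fiber's $j$-th coordinates as minus the coefficient of $x_j^{d-1}$, which has degree at most one in $t$ since $f$ has total degree $d$. The extra bookkeeping you supply about enlarging $\Delta$ so the fiber points biject onto the roots of $f(x_j,t)$ is a reasonable elaboration of what the paper leaves implicit in its ``general position'' hypothesis, but it is not a different argument.
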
 
%%%%%%%%%%%%%%%%%%%%%%%%%%%%%%%%%%%%%%%%%%%%%%%%%%%%%%%%%%%%%%%%%%%%%%%%%%%%%%%%% 
 
The converse to this holds. 
 
%%%%%%%%%%%%%%%%%%%%%%%%%%%%%%%%%%%%%%%%%%%%%%%%%%%%%%%%%%%%%%%%%%%%%%%%%%%%%%%%% 
\begin{proposition}\label{Prop:two} 
 No proper subset of the points in a fiber of $C$ over $t\in\ell\smallsetminus\Delta$ has sum that is an affine 
 function of $t$. 
\end{proposition}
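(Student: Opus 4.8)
The plan is to assume the statement fails and to force two distinct sheets of the degree $d$ cover $\pi\colon C\to\ell$ to coincide over $\ell\smallsetminus\Delta$, where it is unramified. So suppose $W'$ is a nonempty proper subset of a fiber $\pi^{-1}(t_0)$ with $t_0\in\ell\smallsetminus\Delta$, and that the sum $\Sigma(t)=\sum_{w\in W'}w\in\CC^{n-m}$, obtained by analytically continuing the points of $W'$ as $t$ varies, equals an affine function $c_0t+c_1$. An affine function is entire and single valued on $\ell$, so the identity $\Sigma(t)=c_0t+c_1$ is preserved under analytic continuation along every path in $\ell\smallsetminus\Delta$. Label the points of the reference fiber $\pi^{-1}(t_0)$ by $1,\dots,d$, let $\xi_1,\dots,\xi_d$ be the corresponding local sections of $\pi$ near $t_0$, and let $G\le S_d$ be the monodromy group of $\pi$ over $\ell\smallsetminus\Delta$. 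Continuing $\Sigma$ around a loop whose monodromy is $g\in G$ turns the left side into $\sum_{v\in g(W')}\xi_v(t)$ while the right side is unchanged, so $\sum_{v\in g(W')}\xi_v=\sum_{w\in W'}\xi_w$ for every $g\in G$.

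The one real input I would need is that $G$ is the full symmetric group $S_d$. Because $V$ and $L$ are in general position, $\pi$ is the restriction to a general line of a generic projection of the irreducible variety $V$; its branch points are then simple, so each free generator of $\pi_1(\ell\smallsetminus\Delta)$ acts as a transposition, and since $C$ is irreducible (Theorem~\ref{thm:bertini}) the group $G$ is transitive. A transitive subgroup of $S_d$ generated by transpositions is all of $S_d$. Only the étale locus $\pi^{-1}(\ell\smallsetminus\Delta)$ enters here, so singularities of $C$ are irrelevant; this is a form of the uniform position principle, and in the write-up I would either cite it or expand the sketch just given.

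Granting $G=S_d$, the conclusion is immediate: choose $p\in W'$ and $q\in\pi^{-1}(t_0)\smallsetminus W'$, possible since $W'$ is nonempty and proper, and take $g=(p\;q)\in S_d=G$. Then $g(W')=(W'\smallsetminus\{p\})\cup\{q\}$, so subtracting the two equal sums gives $\xi_q(t)-\xi_p(t)\equiv 0$ near $t_0$, hence on all of $\ell\smallsetminus\Delta$ by continuation. But $\pi$ is étale of degree $d$ over $\ell\smallsetminus\Delta$, so $\xi_p$ and $\xi_q$ are everywhere distinct, a contradiction. I phrased this with the $\CC^{n-m}$-valued sum because the $d$ points of an étale fiber are automatically distinct as points of $\CC^{n-m}$; one could equally project to a generic coordinate $x_i$, as around Proposition~\ref{prop:generic-projection-to-P2}, and run the same monodromy argument with the $d$ distinct roots of $f(x_i,t)$.

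The only step with genuine content is the claim $G=S_d$, and this is exactly where the general position hypothesis is indispensable: for the irreducible curve $x^d=t$ the monodromy is merely cyclic and the proposition actually fails (a block of two roots summing to zero is a proper subset with affine, indeed identically zero, sum), so no purely formal manipulation can replace it. Everything else is the short monodromy computation above, and the argument does not use Proposition~\ref{Prop:one}.
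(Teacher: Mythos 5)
Your proposal is correct and follows essentially the same route as the paper: use single-valuedness of an affine function to show the subset sum is monodromy-invariant, invoke the fact that the monodromy group of the general linear section of an irreducible variety is the full symmetric group (the paper cites the Lemma on page 111 of Arbarello--Cornalba--Griffiths--Harris for exactly this, where you sketch the uniform-position argument), and then apply a transposition moving one point out of the subset to force two distinct sheets to coincide. The only difference is presentational, so no further comparison is needed.
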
 
%%%%%%%%%%%%%%%%%%%%%%%%%%%%%%%%%%%%%%%%%%%%%%%%%%%%%%%%%%%%%%%%%%%%%%%%%%%%%%%%% 

\begin{example}\label{ex:foliumAlpha}
 Consider this for the folium of Descartes.
 As the folium is a plane curve, a general projection $\alpha$ of Proposition~\ref{prop:generic-projection-to-P2} is a
 general change of coordinates. 
 In the  coordinates $\xi=x+1$ and $t=2y-x$, the folium has equation
\[
   9\xi^3+(3t-39)\xi^2+(3t^2-18t+51)\xi+t^3-3t^2+15t-21\ =\ 0\,.
\]
 The trace is $-\frac{t}{3}+ \frac{13}{3}$.
 Figure~\ref{F:New_Folium} shows Figure~\ref{F:folium} under this change of coordinates.
\begin{figure}[htb]
   \includegraphics{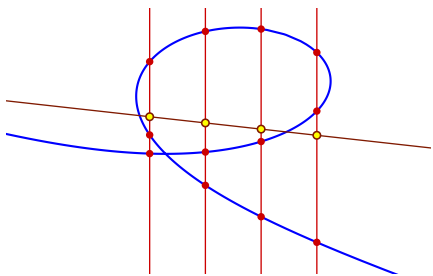}
 \caption{Folium of Descartes in new coordinates}
 \label{F:New_Folium}
\end{figure}
 The lines become vertical, and the average of the trace is the line
 $\xi=-\frac{t}{9}+ \frac{13}{9}$.
\end{example}
 
%%%%%%%%%%%%%%%%%%%%%%%%%%%%%%%%%%%%%%%%%%%%%%%%%%%%%%%%%%%%%%%%%%%%%%%%%%%%%%%%% 
\begin{remark} 
 We generalize the situation of Proposition~\ref{Prop:two}. 
 A \demph{pencil} of linear spaces is a family \defcolor{$M_t$} for $t\in\CC$ of linear spaces that depends 
 affinely on the parameter $t$. 
 Each $M_t$ is the span of a linear space $L$ and a point $t$ on a line $\ell$ that is disjoint from $L$. 
 
 Suppose that $V\subset\PP^n$ is a subvariety of dimension $m$ and that $M_t$ for $t\in\CC$ is a general pencil of 
 linear subspaces of codimension $m$ with $V\cap M_0$ transverse. 
 Let $\Delta\subset\CC$ be the finite set of points $t$ such that the intersection $V\cap M_t$ is not transverse. 
 Given any path $\gamma\,\colon[0,1]\to\CC\smallsetminus\Delta$ with $\gamma(0)=0$ and any $v\in V\cap 
 M_0$, we may analytically continue $v$ along $\gamma$ to obtain a path $v(\gamma(s))$ for $s\in[0,1]$ 
 with $v(\gamma(s))\in V\cap M_{\gamma(s)}$. 
 
 The sum of the points in a subset $W$ of $V\cap M_0$ is 
 \demph{an affine function of $t$} if for a nonconstant path $\gamma\,\colon[0,1]\to\CC\smallsetminus\Delta$ with 
 $\gamma(0)=0$, the sum of the points $w(\gamma(s))$ is an affine function of $\gamma(s)$. 
 This is independent of choice of path and of a general pencil. 
\end{remark} 
%%%%%%%%%%%%%%%%%%%%%%%%%%%%%%%%%%%%%%%%%%%%%%%%%%%%%%%%%%%%%%%%%%%%%%%%%%%%%%%%% 
 
%%%%%%%%%%%%%%%%%%%%%%%%%%%%%%%%%%%%%%%%%%%%%%%%%%%%%%%%%%%%%%%%%%%%%%%%%%%%%%%%% 
\begin{remark}\label{remark:trace-test} 
%{\color{red} Can we use the branch cover to bring in monodromy here? Anton: I would not... I eliminated the reference
%to ``monodromy'' altogether -- how we get partial witness sets is alluded to in the NID discussion.} 
 This leads to the trace test. 
 Let $V\subset\PP^n$ (or $\CC^n$) be a possibly reducible variety of dimension $m$ and $M$ a general 
 linear space of codimension $m$ so that $W=V\cap M$ is a witness set for $V$. 
 Suppose we have 
 a subset $\emptyset\neq W'\subset W$ whose points lie in a single 
 component $V'$ of $V$  so that $W'\subset V'\cap M$. 
 Such a set $W'$ is a \demph{partial witness set} for $V'$. 
 To test if $W'=V'\cap M$, let $M_t$ for $t\in\CC$  be a general pencil of codimension $m$ planes in 
 $\PP^n$ with $M=M_0$ and %we 
 test if the sum of the points of $W'$ is an affine function of $t$. 
 By Proposition~\ref{Prop:two}, $W'=V'\cap M$ if and only if it passes this \demph{trace test}. 
\end{remark} 
%%%%%%%%%%%%%%%%%%%%%%%%%%%%%%%%%%%%%%%%%%%%%%%%%%%%%%%%%%%%%%%%%%%%%%%%%%%%%%%%% 
 
%%%%%%%%%%%%%%%%%%%%%%%%%%%%%%%%%%%%%%%%%%%%%%%%%%%%%%%%%%%%%%%%%%%%%%%%%%%%%%%%% 
\begin{remark}\label{remark:trace-image} 
 Let $U$ be a variety and $\phi\,\colon U\ratto \PP^n$ be a rational map with image $V=\overline{\phi(U)}$. 
 As obtaining defining equations for $V$ may not be practical, working with a witness set $V\cap M$ may not be feasible. 
Instead one may work with the preimage $\phi^{-1}(V\cap M)$ producing a proxy for the witness set $V\cap M$.  
A \demph{partial proxy witness set} is a finite subset of $\phi^{-1}(V\cap M)$. 
It is \demph{complete} if its image is a complete witness set.
 
We can, in particular, employ the trace test for the image working with proxy witness sets
for $V\cap M_t$ in Remark~\ref{remark:trace-test}. 
\end{remark} 
 
Hauenstein and Sommese use this general observation in \cite{WitnessProj} to provide a detailed description of
how proxy witness sets can be computed and used to get witness sets of images of subvarieties under a linear
map $\PP^m \to \PP^n$.  
 
%%%%%%%%%%%%%%%%%%%%%%%%%%%%%%%%%%%%%%%%%%%%%%%%%%%%%%%%%%%%%%%%%%%%%%%%%%%%%%%%% 
\section{Witness collections for multiprojective varieties}\label{S:product} 
 
Suppose that $V\subset\CC^{n_1}\times\CC^{n_2}$ is an irreducible variety of dimension $m>0$. 
Letting $z^{(i)}$ be coordinates for $\CC^{n_i}$ for $i=1,2$, the variety $V$ is defined by polynomials 
$F(z^{(1)},z^{(2)})$ which generate a prime ideal. 
Separately homogenizing these polynomials in each set $z^{(i)}$ of variables gives bihomogeneous polynomials 
that define the closure $\overline{V}$ of $V$ in the product $\PP^{n_1}\times\PP^{n_2}$ of projective spaces. 
Let us also write $V$ for this closure. 
 
Then $V$ has a \demph{multidegree}~\cite[Ch.~19]{Harris}. 
This is a set of nonnegative integers $d_{m_1,m_2}$ where $m_1+m_2=m$ with $0\leq m_i\leq n_i$ for 
$i=1,2$ that has the following geometric meaning. 
Given general linear subspaces $M_i\subset\PP^{n_i}$ of codimension $m_i$ for $i=1,2$ with 
$m_1+m_2=m$, the number of points in the intersection $V\cap (\MOne\times \MTwo)$ is $d_{m_1,m_2}$. 
Multidegrees are log-concave in that for every $1\leq m_1\leq m{-}1$, we have 
 \begin{equation}\label{Eq:log-concave} 
   d_{m_1,m_2}^2\ \geq\ d_{m_1-1,m_2+1}\cdot d_{m_1+1,m_2-1}\,. 
 \end{equation} 
These inequalities of Khovanskii and Tessier are explained in~\cite[Ex.~1.6.4]{Laz}. 
 
Following~\cite{HR15}, a \demph{multihomogeneous witness set} of dimension $(m_1,m_2)$ with $m_1+m_2=\dim V$ 
for an irreducible variety $V$ is a set $\defcolor{W_{m_1,m_2}}:=V\cap(\MOne\times \MTwo)$, where for $i=1,2$, 
$\Mi\subset\PP^{n_i}$ is a general linear subspace of codimension $m_i$. 
More formally, the witness set is a triple consisting of the points $W_{m_1,m_2}$, 
equations for a variety that has $V$ as a component, and equations for  $\MOne$ and for $\MTwo$. 
A \demph{witness collection} is the list of witness sets $W_{m_1,m_2}$ for all $m_1+m_2=m$. 
 
%%%%%%%%%%%%%%%%%%%%%%%%%%%%%%%%%%%%%%%%%%%%%%%%%%%%%%%%%%%%%%%%%%%%%%%%%%%%%%%%% 
\begin{remark}\label{R:WitnessProj} 
 If $m_2=\dim \pi_2(V)$, then the multihomogeneous witness set $V\cap(\MOne\times \MTwo)$ is a (proxy) witness 
 set for the image $\pi_2(V)$ of $V$ in the sense of~\cite{WitnessProj} and
 Remark~\ref{remark:trace-image}. 
\end{remark} 
%%%%%%%%%%%%%%%%%%%%%%%%%%%%%%%%%%%%%%%%%%%%%%%%%%%%%%%%%%%%%%%%%%%%%%%%%%%%%%%%% 
 
Suppose that $V$ is reducible and $W_{m_1,m_2}=V\cap(\MOne\times \MTwo)$  is a multihomogeneous witness set for 
$V$. 
This is a disjoint union of multihomogeneous witness sets for the irreducible components of $V$ that have non zero 
$(m_1,m_2)$-multidegree. 
We similarly have a witness collection for $V$. 
We consider the problem of decomposing a witness collection into witness collections for the components 
of $V$. 
For every irreducible component $V'$ of $V$ it is possible to obtain a \demph{partial witness collection} 
$W'_{m_1,m_2}$ for $m_1+m_2=m$ and then---much like in the affine/projective setting---use the monodromy 
action and the membership test to build up a (complete) witness collection. 
We seek a practical trace test to verify that a partial witness collection is, in fact, complete. 
That is, if we have equality  $W'_{m_1,m_2}=V'\cap\left(\MOne\times \MTwo\right)$ for each partial witness set
$W'_{m_1,m_2}$  for $V'$.  
 
By Example 20 of~\cite{HR15}, the trace of a multihomogeneous witness set as the linear subspaces $\MOne$ and 
$\MTwo$ each vary in pencils is not multilinear. 
The trace test for subvarieties of products of projective spaces in~\cite{HR15} uses the 
\demph{Segre embedding} 
$\sigma\,\colon\PP^{n_1}\times\PP^{n_2} \to \PP^{(n_1+1)(n_2+1)-1}$ to construct the proxy
witness sets as in Remark~\ref{remark:trace-image} (with $\phi = \sigma$). 
Since $\sigma$ gives an isomorphism from $V$ to $\sigma(V)$, proxy witness sets are preimages of witness sets (in contrast
to~\cite{WitnessProj} where extra work is needed, since the preimage of a
witness point may not be 0-dimensional).  
 
%%%%%%%%%%%%%%%%%%%%%%%%%%%%%%%%%%%%%%%%%%%%%%%%%%%%%%%%%%%%%%%%%%%%%%%%%%%%%%%%% 
\begin{remark}\label{remark:Segre} 
 Multihomogeneous witness sets for $V$ are typically significantly smaller than witness sets for $\sigma(V)$. 
 Let $V\subset \PP^{n_1}\times\PP^{n_2}$ be a subvariety with multidegrees $d_{m_1,m_2}$.
 By Exercise 19.2 in~\cite{Harris} the degree of its image under the Segre embedding is 
\[ 
   \deg(V)\ =\ \sum_{m_1+m_2=m} d_{m_1,m_2}\frac{m!}{m_1! m_2!}\,. 
\] 
 This is significantly larger that the union of the multihomogeneous witness sets for $V$. 
 Thus a witness set for the image of $V$ under the Segre embedding (a \demph{Segre witness set} in~\cite{HR15}) 
 involves significantly more points than any of its multihomogeneous witness sets. 
 \end{remark}  
%%%%%%%%%%%%%%%%%%%%%%%%%%%%%%%%%%%%%%%%%%%%%%%%%%%%%%%%%%%%%%%%%%%%%%%%%%%%%%%%% 
 
\begin{example} 
  The graph $V\subset\PP^m\times\PP^m$ of a general linear map has multidegrees 
  $(1,\dotsc,1)$ with sum $m{+}1$, but its image under the Segre embedding has degree $2^m$. 
  If $V$ is the closure of the graph of the the standard Cremona transformation 
  $[x_0,\dotsc,x_m]\mapsto[1/x_0,\dotsc,1/x_m]$, then its multidegrees are $d_{i,m-i}=\binom{m}{i}$ 
  with sum $2^m$   and its degree under the Segre embedding is $\binom{2m}{m}=\sum_i \binom{m}{i}^2$, which is  
  considerably larger. 
\end{example} 
 
  This suggests that one should seek algorithms that work directly with multihomogeneous 
  witness sets $W_{m_1,m_2}$ for $m_1+m_2=m$ and---as the graph of Cremona suggests---also involve as few of 
  these as possible. 
 
  Algorithm~\ref{algorithm:multi-trace-test} does exactly that while avoiding the Segre embedding. 
%%%%%%%%%%%%%%%%%%%%%%%%%%%%%%%%%%%%%%%%%%%%%%%%%%%%%%%%%%%%%%%%%%%%%%%%%%%%%%%%% 
 
%%%%%%%%%%%%%%%%%%%%%%%%%%%%%%%%%%%%%%%%%%%%%%%%%%%%%%%%%%%%%%%%%%%%%%%%%%%%%%%%% 
\section{Dimension reduction and multihomogeneous trace test}\label{S:dimension} 
 
%The following version of Bertini's theorem is particularly useful. % in numerical algebraic geometry. 
%It follows, for instance, from~\cite[Thm.~6.3~(4)]{jouanolou}. 

We give a useful version of Bertini's theorem that follows from~\cite[Thm.~6.3~(4)]{jouanolou}.
 
%%%%%%%%%%%%%%%%%%%%%%%%%%%%%%%%%%%%%%%%%%%%%%%%%%%%%%%%%%%%%%%%%%%%%%%%%%%%%%%%% 
\begin{theorem}[Bertini's Theorem]\label{thm:bertini} 
 Let $V$ be a variety and $\phi\,\colon V\ratto \PP^n$ be a rational map such that $\dim\phi(V) \geq 2$. 
 Then $V$ is irreducible if and only if $V\cap \phi^{-1}(H)$ is irreducible for a generic hypersurface
 $H\subset\PP^n$.  
\end{theorem}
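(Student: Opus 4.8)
The plan is to reduce the statement to Bertini's theorem for a \emph{morphism}, which is~\cite[Thm.~6.3~(4)]{jouanolou}, by resolving the indeterminacy of $\phi$. Let $U\subseteq V$ be the dense open locus on which $\phi$ is defined, and let $\Gamma\subseteq V\times\PP^n$ be the Zariski closure of the graph $\{(v,\phi(v)):v\in U\}$, with projections $p\colon\Gamma\to V$ and $q\colon\Gamma\to\PP^n$. Then $p$ is proper and restricts to an isomorphism over $U$, while $q$ is a morphism with $q(\Gamma)=\overline{\phi(V)}$, so $\dim q(\Gamma)\ge 2$. Under the convention that $\phi^{-1}(H)$ is the closure in $V$ of the honest preimage $(\phi|_U)^{-1}(H)$, the isomorphism $p$ over $U$ identifies $(\phi|_U)^{-1}(H)$ with $q^{-1}(H)\cap p^{-1}(U)$, so $\phi^{-1}(H)$ is the $p$-image of the closure of $q^{-1}(H)\cap p^{-1}(U)$ in $\Gamma$.

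First I would record two elementary correspondences. Since $U$ is dense in $V$ and $p$ is an isomorphism over $U$, the irreducible components of $\Gamma$ correspond under $p$ to those of $V$; in particular $V$ is irreducible if and only if $\Gamma$ is. Next I would prove the key lemma that, for a generic hypersurface $H\subset\PP^n$, the open set $q^{-1}(H)\cap p^{-1}(U)$ is dense in $q^{-1}(H)$: since $q^{-1}(H)$ is cut out of $\Gamma$ by a single equation and $q$ maps no component of $\Gamma$ into a generic $H$, each component of $q^{-1}(H)$ meeting a component $\Gamma_i$ of $\Gamma$ has dimension $\dim\Gamma_i-1$, whereas the exceptional locus $E:=\Gamma\smallsetminus p^{-1}(U)$ meets $\Gamma_i$ in dimension at most $\dim\Gamma_i-1$; a component of $q^{-1}(H)$ inside $E$ would therefore be a maximal-dimensional piece of $E$ mapped by $q$ entirely into $H$, which a generic $H$ rules out because for a fixed nonempty subvariety $Z\subseteq\PP^n$ the hypersurfaces of a given degree containing $Z$ form a proper linear subspace of the corresponding linear system, and there are only finitely many candidate $Z$. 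Granting the lemma, for generic $H$ the space $q^{-1}(H)$ is irreducible if and only if $\phi^{-1}(H)$ is, since irreducibility is detected on any nonempty open subset and is preserved under closure.

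With these correspondences in hand, the theorem follows by applying~\cite[Thm.~6.3~(4)]{jouanolou} to the morphism $q\colon\Gamma\to\PP^n$, whose image has dimension at least two: that result gives that $\Gamma$ is irreducible if and only if $q^{-1}(H)$ is irreducible for generic $H$, and chaining this with the two correspondences yields ``$V$ irreducible $\iff$ $\phi^{-1}(H)$ irreducible for generic $H$'' in both directions at once; the substantive Bertini input is used as a black box via~\cite{jouanolou}. I expect the main obstacle to be the reduction itself, i.e.\ checking that the naive preimage $\phi^{-1}(H)$ agrees up to closure with the honest pullback $q^{-1}(H)$ on the graph --- precisely the density lemma of the previous paragraph, which is what controls how a generic $H$ meets the image of the indeterminacy locus of $\phi$. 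Once that is in place the rational map plays no further role and the argument is a clean invocation of the morphism case; when $\phi$ is already a morphism we may take $\Gamma=V$ and skip the reduction entirely.
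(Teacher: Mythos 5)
Your proposal is correct, but there is nothing in the paper to compare it against in detail: the paper offers no proof of Theorem~\ref{thm:bertini} at all, only the assertion that it ``follows from'' \cite[Thm.~6.3~(4)]{jouanolou}, and that citation concerns a \emph{morphism} to $\PP^n$. Your graph construction $\Gamma\subseteq V\times\PP^n$ with the density lemma for $q^{-1}(H)\cap p^{-1}(U)$ is precisely the missing reduction from the rational-map statement to the morphism statement, and the dimension count you give (components of $q^{-1}(H)$ have codimension one in each $\Gamma_i$, while the exceptional locus $E$ has codimension at least one and a generic $H$ contains none of the finitely many images $q(Z')$ of its top-dimensional components) is sound. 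Two small caveats, both inherited from the paper rather than introduced by you: first, Jouanolou's theorem as usually stated gives only the forward implication (irreducible total space implies irreducible generic member), so the ``if'' direction should be supplied separately --- it is elementary (distinct components of $\Gamma$ yield distinct components of $q^{-1}(H)$ for generic $H$), but it silently requires that no irreducible component of $V$ have image of dimension zero under $\phi$, since a component contracted to a point is missed by a generic hypersurface and would make the biconditional fail as literally stated; second, Jouanolou's statement is for hyperplane sections, so ``generic hypersurface'' needs a word about composing with a Veronese embedding. Neither affects the paper's only use of the theorem, where $\phi$ is a genuine morphism ($\pi_i$ on a product of projective spaces) applied to an irreducible $V$.
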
 
%%%%%%%%%%%%%%%%%%%%%%%%%%%%%%%%%%%%%%%%%%%%%%%%%%%%%%%%%%%%%%%%%%%%%%%%%%%%%%%%% 
 
In \S\ref{S:affine} we sliced a projective variety $V\subset\PP^n$, $\dim V \geq 2$,
with a general linear subspace.  
This reduced the dimensions of the ambient space and of the variety, but did not alter its degree or
irreducible decomposition.  
A similar dimension reduction procedure is more involved for subvarieties of a product of
projective spaces.  
 
%%%%%%%%%%%%%%%%%%%%%%%%%%%%%%%%%%%%%%%%%%%%%%%%%%%%%%%%%%%%%%%%%%%%%%%%%%%%%%%%% 
\begin{proposition}\label{prop:reduction-to-curve} 
 Let $V \subset \PP^{n_1}\times\PP^{n_2}$ be an irreducible variety 
 and suppose that $d_{m_1,m_2}(V)\neq 0$ is a nonzero multidegree  with $1\leq m_1,m_2$. 
 For $i=1,2$, let $\MiPrime$ be a general linear subspace of\/ $\PP^{n_i}$ of codimension $m_i{-}1$. 
 Then $\defcolor{V'}:=V\cap(\MOnePrime\times \MTwoPrime)$ is irreducible, has dimension two, and multidegrees 
\[ 
  d_{0,2}(V')\ =\ d_{m_1-1,m_2+1}\,,\  \ 
  d_{1,1}(V')\ =\ d_{m_1,m_2}\,,\  \quad\mbox{\rm and}\quad 
  d_{2,0}(V')\ =\ d_{m_1+1,m_2-1}\,. 
\] 
 We have several overlapping cases. 
\begin{enumerate} 
\item[(1)] If $d_{0,2}(V')=d_{2,0}(V')=0$, then $\pi_1(V')$ and $\pi_2(V')$ are both curves, $V'$ is their product, 
        and $V$ is the product of its projections $\pi_1(V)\subset\PP^{n_1}$ and  $\pi_2(V)\subset\PP^{n_2}$. 
\item[(2a)] If $d_{0,2}(V')=0$ then $\pi_1(V')$ is an irreducible curve and $V'$ is fibered over $\pi_1(V')$ by curves. 
      Also, $\pi_1(V)$ is irreducible of dimension $m_1$ and the map $V\to\pi_1(V)$ is a fiber bundle. 
      If $d_{2,0}(V')=0$, then the same holds {\it mutatis mutandis}. 
\item[(2b)] One of  $d_{2,0}(V')$ or $d_{0,2}(V')$ is non-zero. 
      Suppose that $d_{2,0}(V')\neq 0$. 
      Then $\pi_1(V')$ is two-dimensional, and for a general hyperplane $H\subset\PP^{n_1}$, 
      $W \cap (H \times \PP^{n_2})$ is an irreducible curve $C$ with $d_{1,0}(C)=d_{2,0}(V')$ and 
      $d_{0,1}(C)=d_{1,1}(V')$. 
\end{enumerate} 
\end{proposition}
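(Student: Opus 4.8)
The plan is to derive the whole statement from repeated applications of Bertini's Theorem (Theorem~\ref{thm:bertini}, used for generic hyperplane sections), once a few numerical facts are in place. First, since $d_{m_1,m_2}(V)\neq0$, a generic codimension-$m_i$ subspace of $\PP^{n_i}$ must already meet $\pi_i(V)$, so $\dim\pi_i(V)\geq m_i$ for $i=1,2$. Also $V'\neq\emptyset$: for generic hyperplanes $H_i\subset\PP^{n_i}$ the set $V\cap\bigl((\MOnePrime\cap H_1)\times(\MTwoPrime\cap H_2)\bigr)$ is a multihomogeneous witness set with $d_{m_1,m_2}(V)>0$ points, all lying in $V'$. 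Since $\MOnePrime\times\MTwoPrime$ has codimension $m-2$, cutting one hyperplane at a time (each cut lowers the dimension of every component by exactly one and keeps it nonempty while positive-dimensional) shows $V'$ has pure dimension $2$. One further generic cut computes the multidegrees: intersecting $V'$ with a generic pair $N_1\times N_2$ of codimension $(a,b)$, $a+b=2$, gives $V\cap\bigl((\MOnePrime\cap N_1)\times(\MTwoPrime\cap N_2)\bigr)$, where $\MOnePrime\cap N_1$ and $\MTwoPrime\cap N_2$ are generic of codimensions $m_1-1+a$ and $m_2-1+b$, so $d_{a,b}(V')=d_{m_1-1+a,\,m_2-1+b}(V)$; this is the asserted list, and in particular $d_{1,1}(V')=d_{m_1,m_2}(V)\neq0$.

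\textbf{Irreducibility of $V'$.} Carry out the $m_1-1$ cuts in $\PP^{n_1}$ before the $m_2-1$ cuts in $\PP^{n_2}$. Each cut in $\PP^{n_1}$ intersects the current (irreducible) variety with $\pi_1^{-1}(H)$ for generic $H$, i.e.\ it is the case $\phi=\pi_1$ of Theorem~\ref{thm:bertini}; since $\dim\pi_1$ of the current variety never drops below $\dim\pi_1(V)-(m_1-2)\geq2$, the variety stays irreducible, and $V^{(1)}:=V\cap(\MOnePrime\times\PP^{n_2})$ is irreducible of dimension $m_2+1$ with $V'=V^{(1)}\cap(\PP^{n_1}\times\MTwoPrime)$. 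The crux is that $\dim\pi_2(V^{(1)})\geq m_2$: it is $\geq m_2-1$, for otherwise the generic codimension-$(m_2-1)$ space $\MTwoPrime$ would miss $\pi_2(V^{(1)})$ and $V'$ would be empty; and it is not $m_2-1$, since then $\pi_2(V^{(1)})\cap\MTwoPrime$, hence $\pi_2(V')$, would be finite, a generic hyperplane of $\PP^{n_2}$ would miss it, and $d_{1,1}(V')$ would vanish, contradicting the first paragraph. Then the $m_2-1$ cuts in $\PP^{n_2}$ keep $\dim\pi_2\geq m_2-(m_2-2)=2$, so Theorem~\ref{thm:bertini} with $\phi=\pi_2$ shows $V'$ is irreducible.

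\textbf{The cases.} Write $a=\dim\pi_1(V')$, $b=\dim\pi_2(V')$; then $a,b\in\{1,2\}$ (each is $\geq1$ because $d_{1,1}(V')\neq0$), $a+b\geq2$ as $V'\subseteq\overline{\pi_1(V')}\times\overline{\pi_2(V')}$, and $d_{2,0}(V')=0\iff a=1$, $d_{0,2}(V')=0\iff b=1$. The log-concavity inequalities~\eqref{Eq:log-concave} show $\{\,j:d_{j,m-j}(V)\neq0\,\}$ is an interval (a log-concave sequence of nonnegative integers has no internal zeros) with endpoints $m-\dim\pi_2(V)$ and $\dim\pi_1(V)$, and it contains $m_1$. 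In case~(1), $a=b=1$: then $V'$ lies in, and by dimension equals, the irreducible surface $\overline{\pi_1(V')}\times\overline{\pi_2(V')}$, whose factors are irreducible curves; also the interval contains $m_1$ but not $m_1\pm1$, so it equals $\{m_1\}$, whence $\dim\pi_1(V)=m_1$, $\dim\pi_2(V)=m_2$, and $V=\overline{\pi_1(V)}\times\overline{\pi_2(V)}$ by the same comparison. In case~(2a), say $d_{0,2}(V')=0$, so $b=1$ (the case $d_{2,0}(V')=0$ being symmetric): then $\pi_2(V')$ is an irreducible curve and, $V'$ being irreducible of dimension $2$ and surjecting onto it, every fiber of $\pi_2|_{V'}$ is a curve; the vanishing $d_{m_1-1,m_2+1}(V)=d_{0,2}(V')=0$ forces the left endpoint of the interval to be $m_1$, so $\pi_2(V)$ is irreducible of dimension $m_2$; finally the fibers of $V'\to\pi_2(V')$ are generic codimension-$(m_1-1)$ slices of those of $V\to\pi_2(V)$, and combining generic flatness and generic smoothness of $\pi_2|_V$ with the fact that the curve $\pi_2(V')=\pi_2(V)\cap\MTwoPrime$, being a generic linear section of $\pi_2(V)$, avoids the codimension-$\geq2$ locus over which those fibers jump in dimension, yields the fiber-bundle statement for $V\to\pi_2(V)$. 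In case~(2b), say $d_{2,0}(V')\neq0$, so $a=2$: then $\pi_1(V')$ is a surface, so for generic $H\subset\PP^{n_1}$, Theorem~\ref{thm:bertini} with $\phi=\pi_1$ makes $C:=V'\cap(H\times\PP^{n_2})$ an irreducible curve; cutting $C$ by one more generic hyperplane in $\PP^{n_1}$ (resp.\ $\PP^{n_2}$) gives $d_{1,0}(C)=d_{2,0}(V')$ (resp.\ $d_{0,1}(C)=d_{1,1}(V')$), as in the first paragraph.

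\textbf{The main obstacle.} The dimension and multidegree bookkeeping and case~(2b) are routine. The real content is the irreducibility of $V'$: one must rule out being stranded needing to cut factor~$i$ while $\dim\pi_i=1$, and the argument above that this never happens for the chosen order of cuts is the only place where the hypothesis $d_{m_1,m_2}(V)\neq0$---as opposed to merely $V'\neq\emptyset$---enters essentially. The remaining delicate point is the fiber-bundle assertion in case~(2a): promoting ``generically smooth with equidimensional fibers'' to an honest bundle, and in particular controlling $V\to\pi_2(V)$ over all of $\pi_2(V)$ rather than only over a generic linear slice, is where the full strength of the vanishing multidegree (via the no-internal-zeros property of log-concave sequences) has to be used, and is the most technical part of the proof.
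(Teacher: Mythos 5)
Your overall strategy is the same as the paper's: iterate Bertini's Theorem (Theorem~\ref{thm:bertini}) with $\phi=\pi_1$ and $\phi=\pi_2$, one hyperplane at a time. The paper's own proof is three sentences long and simply asserts that the projections ``satisfy the assumptions'' of that theorem; your verification that $\dim\pi_i$ of the intermediate varieties stays $\geq 2$ throughout---in particular the argument that $\dim\pi_2(V^{(1)})\geq m_2$ via $d_{1,1}(V')=d_{m_1,m_2}(V)\neq 0$---is exactly the point the paper glosses over, and you fill it correctly. The dimension and multidegree bookkeeping and case (2b) are also fine. (You also silently read case (2a) with the roles of the factors corrected: $d_{0,2}(V')=0$ constrains $\pi_2(V')$, not $\pi_1(V')$, as your own analysis and the convention visible in case (2b) both confirm; the printed statement appears to have the indices swapped there.)

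Two steps do not hold up. First, ``a log-concave sequence of nonnegative integers has no internal zeros'' is false: $(1,0,0,1)$ and $(5,0,0,3)$ satisfy~\eqref{Eq:log-concave}. Log-concavity rules out an isolated internal zero but not a run of them, so from $d_{m_1,m_2}\neq 0$ and $d_{m_1\pm 1,m_2\mp 1}=0$ you may conclude $d_{m_1\pm 2,m_2\mp 2}=0$ but nothing further. Your deduction in case (1) that the support equals $\{m_1\}$, hence $\dim\pi_1(V)=m_1$ and $\dim\pi_2(V)=m_2$, therefore needs the genuine support characterization $d_{m_1,m_2}(V)\neq 0$ if and only if $m_i\leq\dim\pi_i(V)$ for $i=1,2$ (a fiber-dimension argument, not a consequence of the Khovanskii--Teissier inequalities); the same applies to reading off $\dim\pi_2(V)=m_2$ in case (2a). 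Second, the fiber-bundle assertion in case (2a) is not established: generic flatness and generic smoothness give local triviality only over a dense open subset of $\pi_2(V)$, and the observation that a generic slice $\pi_2(V')=\pi_2(V)\cap\MTwoPrime$ avoids a codimension-$\geq 2$ jumping locus controls $V'\to\pi_2(V')$, not $V\to\pi_2(V)$ over all of its base. You flag this yourself as the most technical point, but the sketch as written does not close it. (To be fair, the paper's own proof says nothing about cases (1) and (2a) beyond ``follows from the case analysis,'' so these are gaps in the published justification as well, not only in your write-up.)
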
 
%%%%%%%%%%%%%%%%%%%%%%%%%%%%%%%%%%%%%%%%%%%%%%%%%%%%%%%%%%%%%%%%%%%%%%%%%%%%%%%%% 
 
Case (1) is distinguished from cases (2a) and (2b) as follows. 
Consider the linear maps induced by projections $\pi_i$, $i=1,2$,  
on the tangent space of $V'$ at a general point. 
We are in case (1) if and only if both maps on tangent spaces are degenerate. 
 
Case (1) reduces to the analysis of projections $\pi_i(V')$, 
otherwise it is possible to use Bertini's theorem to slice once more (preserving irreducibility and 
multidegrees)  to reduce a two-dimensional subvariety $V'$ to a curve $C$. 
 
 %%%%%%%%%%%%%%%%%%%%%%%%%%%%%%%%%%%%%%%%%%%%%%%%%%%%%%%%%%%%%%%%%%%%%%%%%%%%%%%%% 
\begin{example}
Consider the three-dimensional variety $V$ in $\PP^4\times\PP^4$ defined by 
%$f:=(x_0+x_1)^3+(x_0+x_2)^3+(x_0+x_3)^3+(x_0+x_4)^3=0$ 
$f:=\sum_{i=1}^4(x_0+x_i)^3=0$ 
and the maximal minors of the $5\times 2$ matrix 
$[y_i,\partial f/\partial x_i]$.
The multidegree of $V$ is $(d_{3,0},d_{2,1},d_{1,2},d_{0,3})=(3,6,12,0)$.
Since $d_{2,1}(V)\neq 0$, we intersect $V$ 
with $\MOnePrime\times \MTwoPrime$ where $\MOnePrime$ is a hyperplane in $\PP^4$ and $\MTwoPrime=\PP^4$;
the multidegree of $V'$ is $(d_{2,0},d_{1,1},d_{0,2})=(3,6,12)$.
On the other hand, $d_{1,2}(V)$ is also non-zero. 
Intersecting $V$ with $\MOnePrime\times \MTwoPrime$ where now  $\MOnePrime=\PP^4$ and $\MTwoPrime$ is a
hyperplane in $\PP^4$, the multidegree of  $V'$ is $(d_{2,0},d_{1,1},d_{0,2})=(6,12,0)$.
Each may be sliced once more to reduce to a curve in either $\PP^4\times\PP^2$, $\PP^3\times\PP^3$, or
$\PP^2\times\PP^4$. 
\end{example}
%%%%%%%%%%%%%%%%%%%%%%%%%%%%%%%%%%%%%%%%%%%%%%%%%%%%%%%%%%%%%%%%%%%%%%%%%%%%%%%%% 

The following %proposition 
multihomogeneous counterpart  of Proposition~\ref{prop:generic-projection-to-P2}
is not a part of our multihomogeneous trace test. 
We include it to provide better intuition to the reader. 
%It is the multihomogeneous counterpart of the reduction to a planar curve stated in
%Proposition~\ref{prop:generic-projection-to-P2}.  

%%%%%%%%%%%%%%%%%%%%%%%%%%%%%%%%%%%%%%%%%%%%%%%%%%%%%%%%%%%%%%%%%%%%%%%%%%%%%%%%% 
\begin{proposition}\label{prop:generic-projection-to-P1xP1} 
Let $C\subset \PP^{n_1}\times\PP^{n_2}$ be a curve. 
Let $\alpha_i\,\colon\PP^{n_i}\ratto \PP^1$ be a generic linear projection for $i=1,2$. 
Then $C$ is irreducible if and only if $(\alpha_1\times\alpha_2)(C) \subset \PP^1\times\PP^1$ is irreducible. 
\end{proposition}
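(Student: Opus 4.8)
The plan is to follow the same template used for Proposition~\ref{prop:generic-projection-to-P2}, reducing to the single-factor case and then combining. One direction is immediate: if $C$ is reducible, say $C = C_1 \cup C_2$ properly, then $(\alpha_1\times\alpha_2)(C) = (\alpha_1\times\alpha_2)(C_1) \cup (\alpha_1\times\alpha_2)(C_2)$ is a union of two proper closed subsets, hence reducible (note each image is a curve, as the $\alpha_i$ are dominant on each component). So the content is the converse: if $C$ is irreducible, then its image $D := (\alpha_1\times\alpha_2)(C)$ in $\PP^1\times\PP^1$ is irreducible. Since the continuous image of an irreducible space is irreducible, and $D$ is the closure of such an image, $D$ is automatically irreducible — but that is too cheap and would apply to any projection, so the real point must be that $D$ is a genuine curve (not all of $\PP^1\times\PP^1$ and not a point) and, more importantly, that the map $C \to D$ is birational so that a witness-set count is preserved. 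I would therefore state and prove the sharper claim: for generic $\alpha_1,\alpha_2$, the restriction $\alpha_1\times\alpha_2 \colon C \to D$ is birational onto its image, and $D$ is a curve of bidegree $(d_{0,1}(C), d_{1,0}(C))$.

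First I would handle the case $n_1 = n_2 = 1$, where there is nothing to prove (take $\alpha_i = \id$). Then I would reduce the general case by treating one factor at a time: it suffices to show that for a curve $C \subset \PP^{n_1}\times\PP^1$ with $n_1 \geq 2$, a generic projection $\alpha_1\colon \PP^{n_1}\ratto\PP^1$ has $(\alpha_1\times\id)(C)$ irreducible and the map birational; composing the two one-factor reductions then gives the statement, since after the first reduction the resulting curve in $\PP^1\times\PP^{n_2}$ is still irreducible (by the first step) and we apply the reduction again to the second factor. For the one-factor reduction, the standard argument: a generic projection $\PP^{n_1}\ratto\PP^2$ is birational on any curve, and then a further generic projection $\PP^2\ratto\PP^1$ is birational on a plane curve (a generic pencil of lines through a point not on the curve and not on any of the finitely many secant lines meeting… — here one uses $n_1 \geq 2$ so that there is room for the projection center to avoid the curve and its secant variety, exactly as in Proposition~\ref{prop:generic-projection-to-P2}). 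The trick is that we must keep track of the $\PP^1$ factor: since $\alpha_1\times\id$ only modifies the first coordinate and $\id$ is injective, a generic choice making $\alpha_1$ birational on the image of $C$ in $\PP^{n_1}$ — or, more carefully, birational as a map of the curve $C$ itself, which may require the projection center to avoid not just secant lines of $\pi_1(C)$ but secant lines joining points of $C$ with the same second coordinate — makes $\alpha_1\times\id$ birational on $C$. One can also invoke Bertini's Theorem~\ref{thm:bertini} directly: the fibers of $\alpha_1$ are hyperplanes, so $C\cap(\alpha_1\times\id)^{-1}(\text{point})$ being a reduced point for generic choices is what is needed, and irreducibility of the image then follows as in the single-projective-space case.

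The main obstacle I anticipate is the bookkeeping around the projection center: in a single $\PP^n$ one projects away from a generic linear center of the appropriate dimension, but here we want to preserve one $\PP^1$ factor while only collapsing the other, so the relevant "secant variety to avoid" lives in $\PP^{n_1}$ but is defined using the extra structure of the map $C \to \PP^{n_1}$ (which need not be an embedding). I would resolve this by first applying a generic projection $\PP^{n_1}\ratto\PP^3$, under which $C$ maps isomorphically to a smooth curve (generic projection to $\PP^3$ embeds), reducing to the case $n_1 = 3$ where $C \hookrightarrow \PP^3$ is an honest smooth curve, and then project $\PP^3\ratto\PP^1$: a generic line $\ell\subset\PP^3$ as center misses $C$ (since $\dim\ell + \dim C = 2 < 3$) and is not a secant line of $C$ (secant variety of a space curve is a hypersurface, so generic lines are not secant), giving that $\alpha_1$ is injective on $C$, hence $\alpha_1\times\id$ is injective on $C$; irreducibility of the image is then automatic. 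This keeps every step either routine or a citation to Bertini, and the bidegree computation at the end is a direct count of fibers.
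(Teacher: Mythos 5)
Your middle paragraph correctly identifies the crux --- the only pairs of points of $C$ that $\alpha_1$ must separate are those with the same second coordinate, so the relevant ``secant variety'' is built from the fiber product of $C$ with itself over $\PP^{n_2}$, not from all secants of $\pi_1(C)$ --- but your final paragraph abandons this, and the step substituted for it is false. A projection $\PP^3\ratto\PP^1$ centered at a line $\ell$ identifies two points $x,y$ exactly when the line $\overline{xy}$ \emph{meets} $\ell$, so injectivity on a curve $D\subset\PP^3$ would require $\ell$ to be disjoint from every secant line of $D$, not merely that $\ell$ itself fail to be a secant. Since the secant lines of any curve of degree $d\geq 2$ sweep out at least a surface, every line meets some secant; more simply, the restriction of such a projection to $D$ is a degree-$d$ cover of $\PP^1$ and is never injective for $d\geq 2$. (The preliminary reduction has a related defect: ``generic projection to $\PP^3$ embeds'' is a statement about $\pi_1(C)$, not about $C$, and in any case cannot rescue the impossible final step.) So the conclusion ``$\alpha_1$ is injective on $C$, hence $\alpha_1\times\id$ is injective on $C$'' rests on a false premise, and the one-factor-at-a-time reduction as written collapses.

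The repair is exactly the bookkeeping you flagged and then dropped: only the one-dimensional family of pairs $(c,c')$ with $\pi_2(c)=\pi_2(c')$ matters, their joining lines $\overline{c_1c_1'}$ sweep out at most a surface in $\PP^{n_1}$, and a dimension count on the incidence variety of pairs together with centers shows that a generic codimension-two center identifies only finitely many such pairs --- yielding birationality of $\alpha_1\times\id$ on $C$, not injectivity. The paper implements this idea symmetrically in both factors at once: it forms the incidence variety $\Gamma$ of triples $(s,c,c')$ with $s_i,c_i,c_i'$ collinear for $i=1,2$, computes $\dim\Gamma=4$, and concludes that a generic pair of centers $(p,q)$ either misses the resulting generalized secant variety or has finite fibers over it, so that $\alpha_p\times\alpha_q$ is birational on $C$ and one reduces $\PP^n\times\PP^n$ to $\PP^{n-1}\times\PP^{n-1}$ at each step. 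Two smaller points: your ``immediate'' direction needs distinct components to have distinct images, which is precisely what birationality of the map on all of $C$ (not component by component) provides; and the birational-onto-image claim genuinely fails when some $\pi_i|_C$ is constant (e.g.\ $C=C_1\times\{q\}$ with $\deg C_1\geq 2$), a degenerate case that any version of the argument must treat separately.
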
 
%%%%%%%%%%%%%%%%%%%%%%%%%%%%%%%%%%%%%%%%%%%%%%%%%%%%%%%%%%%%%%%%%%%%%%%%%%%%%%%%% 

%Now that we have 
Having 
reduced to %the case of 
a curve $C \subset \PP^1\times\PP^1$, we could 
use a trace test via the Segre embedding $\PP^1\times\PP^1\to \PP^3$ as %discussed 
in Remark~\ref{remark:Segre}. 
It is more direct to use the trace test in $\CC^2$.
 
%%%%%%%%%%%%%%%%%%%%%%%%%%%%%%%%%%%%%%%%%%%%%%%%%%%%%%%%%%%%%%%%%%%%%%%%%%%%%%%%% 
\begin{example}\label{Ex:P1xP1} 
%%%%%%%%%%%%%%%%%%%%%%%%%%%%%%%%%%%%%%%%%%%%%%%%%%%%%%%%%%%%%%%%%%%%%%%%%%%%%%%%%

%%%%%%%%%%%%%%%%%%%%%%%%%%%%%%%%%%%%%%%%%%%%%%%%%%%%%%%%%%%%%%%%%%%%%%%%%%%%%%%%%
Let us consider the trace test for a curve $C$ in $\PP^1\times\PP^1$. 
%To avoid the use of multiple indices with $z^{(i)}$, we
%
%  Frank thinks that we can just drop the z^ stuff, at least here. Jose agrees. 
% 
Let $x:=(x_0,x_1)$ and $y:=(y_0,y_1)$ be homogeneous coordinates on the two copies of $\PP^1$.  
%J% Do we want to be consistent and have z1=x, z2=y? Jose likes using the x,y. 
Let $C$ be a curve given by the bihomogeneous polynomial $f(x,y):=x_0 y_0^2 - x_1 y_1^2$ of bidegree $(1,2)$

Linear forms $\ellx:=x_1-\frac{7}{2}x_0$ and $\elly:=y_1+y_0$ cut out witness sets  $W_{1,0}$ and $W_{0,1}$ for $C$.
Choose the (sufficiently general) linear forms
%
% Frank removed the unnecessary equation numbering
%
 \[
     \hx\ :=\ x_0\,,\quad
     \hy\ :=\ y_0\,,\quad
    \ellxPrime\ :=\ \tfrac{3}{2}x_1-x_0\,,
    \quad\text{ and }\quad
    \ellyPrime\ :=\ -\tfrac{4}{3}y_1-\tfrac{10}{3}y_0\,,
 \]
 and consider the bilinear form
 \[
   g(x,y)\ :=\ \hx\ellyPrime+\ellxPrime \hy +\hx\hy\,.
 \]
Following the points of $W_{1,0}\cup W_{0,1}$ along the homotopy
 \begin{equation}\label{Eq:bilinear_htpy}
    \defcolor{h(t)}\ :=\ (1-t)\ellx\elly\ +\ tg  
 \end{equation}
from $t=0$ to $t=1$ gives the three $(=1+2)$ points of $C\cap\V{g}$.

Then $(x_1,y_1)$ provides coordinates in the affine chart where $\hx=1$ and $\hy=1$,
with multihomogeneous witness sets $W_{1,0}=\{(1,1)\}$ and $W_{0,1}=\{(7/2,-\sqrt{2/7}),(7/2,\sqrt{2/7})\}$.
The homotopy~\eqref{Eq:bilinear_htpy} from $t=0$ to $t=1$ takes the three witness points 
$W_{1,0}\cup W_{0,1}$ for $C\cap\V{\ellx\elly}$ to the three witness points for 
$C\cap\V{g}$. 
In this chart, $\V{g}$ is a line in $\CC^1\times\CC^1 = \CC^2$, so that $C\cap\V{g}$ is a witness set for the
curve $C$ in $\CC^2$.  
%%%%%%%%%%%%%%%%%%%%%%%%%%%%%%%%%%%%%%%%%%%%%%%%%%%%%%%%%%%%%%%%%%%%%%%%%%%%%%%%%
\begin{figure}[htb]
\begin{picture}(183,150)(-23,0)
 \put(0,0){\includegraphics[height=150pt]{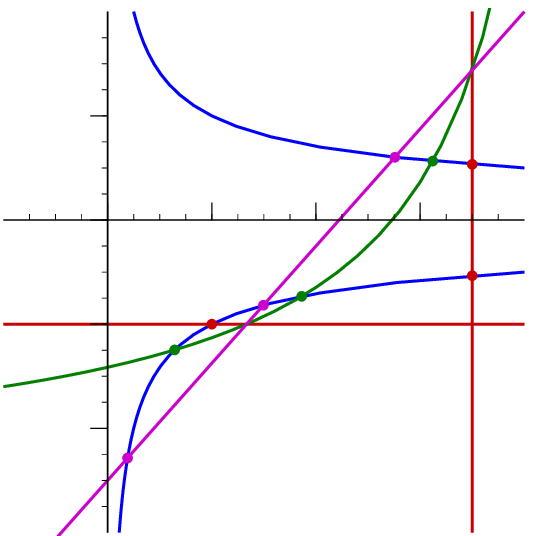}}
 \put(10,26.5){$-2$}   \put(18,116){$1$}  \put(18,145){$y_1$}
 \put(57,97){$1$} \put(87,97){$2$}  \put(146,94){$x_1$}
 \put(-13,60){$\elly$} \put(-23,37){$h(\tfrac{1}{2})$} \put(55,35){$g$}
 \put(137,27){$\ellx$}  \put(150,68){$C$} \put(147,107){$C$}
\end{picture}
\qquad\qquad
\begin{picture}(180,150)
 \put(0,0){\includegraphics[height=150pt]{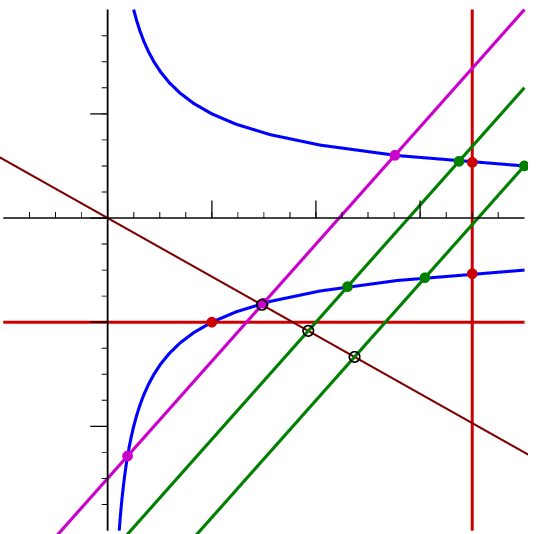}}
 \put(10,26.5){$-2$}   \put(18,116){$1$}  \put(18,145){$y_1$}
 \put(57,97){$1$} \put(87,97){$2$}  \put(-6,95){$x_1$}
 \put(115,122){$g$}
 \put(151,146){$\tau=0$}
% \put(150,136){$\tau=-0.5$}
 \put(151,124){$\tau=-1$}
% \put(150,114){$\tau=-1.5$}
 \put(151,102){$\tau=-2$}
% \put(150,92){$\tau=-2.5$}
\end{picture}
%\caption{Curves, witness sets, and the trace test}
\caption{On the left: the red lines, green curve, and magenta curve correspond, respectively, to
  \eqref{Eq:bilinear_htpy} at $t=0,\,\frac{1}{2},\,1$. 
On the right: the parallel slices $\V{g+\tau}$ are in green, and the average of the witness points ($\frac{1}{3}$ of the
trace) lies on the brown line.
The blue curve is $C$.}
\label{F:bilinear}
\end{figure}
%%%%%%%%%%%%%%%%%%%%%%%%%%%%%%%%%%%%%%%%%%%%%%%%%%%%%%%%%%%%%%%%%%%%%%%%%%%%%%%%%
%[[1.481481482, -.8333333333], 
%[1.703703704, -.9583333333], 
%[1.925925926, -1.083333334], 
%[2.148148148, -1.208333334], 
%[2.370370370, -1.333333333], 
%[2.592592593, -1.458333333]]

Using the witness points $C\cap\V{g}$, we perform the trace test for $C$ in this affine chart,
using the family of lines, $\V{g+\tau}$ as $\tau$ varies. The values of the trace at three points,
\[
  \begin{array}{r|c|c|c}
%    \tau & 0 & -0.5 &-1 &-1.5 & -2 & -2.5\\\hline
%    \text{avg }x_{1} &1.48148&1.70370&1.92592&2.148148&2.37037&2.59259\\
%    \text{avg }y_{1} &-.83333&-.958333&-1.08333&-1.208333&-1.33333&-1.458333
    \tau & 0 & -1 & -2 \\\hline
    \text{avg }x_{1} &1.48148&1.92592&2.37037\\
    \text{avg }y_{1} &-.83333&-1.08333&-1.33333
  \end{array}
\]
let us find the trace, $(\frac{40}{27}-\frac{4}{9}\tau, -\frac{5}{6}+\frac{1}{4}\tau)$, by interpolation.
\end{example} 
 
\begin{remark}\label{remark:trace-test-for-curve} 
It is not essential to reduce to a curve in $\PP^1\times\PP^1$. 
The construction and argument of Example~\ref{Ex:P1xP1} holds, {\it mutatis mutandis}, for an irreducible
curve $\PP^{n_1}\times\PP^{n_2}$  with the trace test performed in an affine patch 
$\CC^{n_1+n_2} \simeq \CC^{n_1}\times\CC^{n_2}$. 
\end{remark} 
%%%%%%%%%%%%%%%%%%%%%%%%%%%%%%%%%%%%%%%%%%%%%%%%%%%%%%%%%%%%%%%%%%%%%%%%%%%%%%%%% 

%To summarize, w
We give a high-level description of an algorithm for the trace test for a \demph{collection of partial 
multihomogeneous witness sets}. 
Details and improvements of a numerical irreducible decomposition algorithm that uses this trace test shall be
given elsewhere. 
For an overview of numerical irreducible decomposition see Section 6 of \cite{HR15}. 
 
Let us fix\vspace{-3pt} 
\begin{description} 
\item[dimension] an integer $m$, the dimension of a witnessed component; 
\item[affine charts] for $i=1,2$, linear forms $h^{(i)}$ defining affine charts $h^{(i)}=1$ in $\PP^{n_i}$; 
\item[slices] for $i=1,2$, for $j=1,\ldots,m$, linear forms $\ell^{(i)}_j$ defining hyperplanes in $\PP^{n_i}$; 
\end{description} 
Write $L_{m_1,m_2}$ for the system 
  $\{h^{(1)}-1,\ell^{(1)}_1,\ldots,\ell^{(1)}_{m_1},h^{(2)}-1,\ell^{(2)}_1,\ldots,\ell^{(2)}_{m_2}\}$. 
 Observe that the system $L_{m_1,m_2}$ defines a product $\MOne\times \MTwo$ in an affine 
chart of $\PP^{n_1}\times\PP^{n_2}$. 

\begin{algorithm}[Multihomogeneous Trace Test]  \label{algorithm:multi-trace-test} 
\label{algorithm:monodromy-breakup} 
\ \newline 
\noindent{\sc Input:} \vspace{-3pt} \begin{description} 
  \item[equations] a multihomogeneous polynomial system $F$; 
  \item[a partial witness collection]  partial witness sets $W_{m_1,m_2}$ where $m_1=0,\ldots,m$ and $m_2=m-m_1$ 
representing an irreducible component $V\subset\V{F}$, i.e., $W_{m_1,m_2} \subset V\cap\V{L_{m_1,m_2}}$. 
\end{description} 
\noindent{\sc Output:} a boolean value = $\text{the witness collection is complete}$. 
\begin{algorithmic}[1] 
\IF{$W_{m_1,m_2}=\emptyset$ for all $m_1=0,\ldots,m$ but one} 
  \IF{both projections of $\V{F}$ to the factors $\PP^{n_i}$ are degenerate at an available witness point} 
    \RETURN (both trace tests for the projections to $\PP^{n_i}$ for $i=1,2$ pass) \AND (the unique nonempty
    set of witness points equals the product of its projections) % to the factors). 
  \ELSE \RETURN \FALSE 
  \ENDIF 
\ELSE 
  \FOR{$m_1 = 0,\ldots, m-1$} 
  \IF{the trace test in $\CC^{n_1+n_2}$ described in Example~\ref{Ex:P1xP1} and 
    Remark~\ref{remark:trace-test-for-curve} (after tracking $W_{m_1,m_2}$ and $W_{m_1+1,m_2-1}$ along the 
    deformation from $\ell^{(1)}_{m_1+1}\ell^{(2)}_{m_2}$ to a general affine linear function on 
    $\CC^{n_1+n_2}$) does not pass} \RETURN \FALSE \ENDIF 
  \ENDFOR 
  \RETURN \TRUE 
\ENDIF 
\end{algorithmic} 
\end{algorithm} 
 
We presented results for subvarieties of a product of {\em two} projective spaces for the sake of clarity. 
These arguments generalize to a product of arbitrarily many~factors with a few subtleties. 
 
%%%%%%%%%%%%%%%%%%%%%%%%%%%%%%%%%%%%%%%%%%%%%%%%%%%%%%%%%%%%%%%%%%%%%%%%%%%%%%%%% 
 
\section{Proofs}\label{S:proofs} 
We present a proof of Proposition~\ref{prop:generic-projection-to-P1xP1} 
immediately following  the proof of Proposition~\ref{prop:generic-projection-to-P2}. 
While the first is standard, it helps to better understand the second.
A map on a possibly reducible variety is birational if it is an isomorphism on a dense open set.
 
Every surjective linear map $\PP^n\ratto \PP^{n-1}$, $n>1$, is the projection from a point 
$p\in\PP^n\simeq \Proj(\CC^{n+1})$. 
Namely, it is the projectivization $\alpha_p\,\colon\PP^n\ratto \PP^{n-1}$ of the quotient map 
$\CC^{n+1}\to\CC^{n+1}/\CC p \simeq\CC^n$. 
This rational map is not defined at $\CC p$. 

%%%%%%%%%%%%%%%%%%%%%%%%%%%%%%%%%%%%%%%%%%%%%%%%%%%%%%%%%%%%%%%%%%%%%%%%%%%%%%%%%
\begin{proof}[Proof of Proposition~\ref{prop:generic-projection-to-P2}] 
We argue that a projection from a generic point is a birational map from a curve $C\subset \PP^n$ to 
its image in $\PP^{n-1}$ for $n\geq 3$.
Birational maps preserve (ir)reducibility.

Consider the incidence variety of triples 
$(p,c,c')\subset \PP^n \times C\times C$, where 
$p, c, c'$ are collinear.   
Projecting to  $C\times C$ shows that this incidence variety is three-dimensional because the image is two-dimensional and
generic fiber is one-dimensional.  
Moreover, the projection to $\PP^n$ is dense in the secant variety of $C$. 

When $n=3$, observe that this secant variety is either
(1) not dense, so projecting from a point not in 
     its closure is a birational map from $C$ onto a plane 
     curve, or 
(2) dense.  In case (2), a general point $p \in \PP^3$ has finitely many preimages $(p,c,c')\in \PP^n \times
C\times C$, so the projection $\alpha_p\,\colon\PP^3\ratto \PP^{2}$  
     gives a birational map from $C$ to a plane curve $C'$ with finitely many 
     points of self-intersection.  
 
Note that for $n>3$ only case (1) is possible. 

Thus, we are always able to reduce the ambient dimension by one until $n=2$. 
\end{proof} 
 
\begin{proof}[Proof of Proposition~\ref{prop:generic-projection-to-P1xP1}] 
Assume that $n=n_1\geq n_2$. 
Any inclusion $\PP^{n_2}\hookrightarrow\PP^{n_1}=\PP^n$ gives an isomorphism from $C$ to a curve in $\PP^n\times\PP^n$.
We may replace $\alpha_2$ with a generic linear map $\PP^n\ratto\PP^1$ that it factors through. 
 
For $(p,q) \in \PP^n\times\PP^n$ consider the product of projection-from-a-point maps 
\[
   \alpha_p\times \alpha_q\ \colon\ \PP^n\times\PP^n \ratto \PP^{n-1}\times\PP^{n-1}\,.
\]
 Let $\Gamma$ be the incidence variety of triples 
$(s,c,c')\in (\PP^n\times\PP^n) \times C\times C$, 
where $s=(s_1,s_2)$, $c=(c_1,c_2)$, and $c'=(c'_1,c'_2)$ 
such that $s_i,c_i,c_i'$, are collinear for $i=1,2$. 

The projection of $\Gamma$ to $C\times C$ has fibers $\PP^1\times \PP^1$, so it is four-dimensional. 
The projection to $\PP^n\times\PP^n$ is dense in a generalized secant variety of dimension four. 
 
When $n=2$, either this secant variety is 
(1) dense, or it is  
(2) not dense, so that ${\alpha_p\times \alpha_q}$ for a point $(p,q)$ not in 
     its closure is a birational map from $C$ to its image.
 In case (1), a general point  $(p,q) \in\PP^n\times\PP^n$ 
     has finitely many preimages. This implies that the map 
${\alpha_p\times \alpha_q}$ is one-to-one on $C$ with the exception of finitely many points, whose images are
self-intersections of the curve $(\alpha_p\times \alpha_q)(C)$.  
 
For $n>2$, case (2) is the only possibility.
 
Thus we are always able to reduce $n$ by one until $n=1$. 
\end{proof}

%%%%%%%%%%%%%%%%%%%%%%%%%%%%%%%%%%%%%%%%%%%%%%%%%%%%%%%%%%%%%%%%%%%%%%%%%%%%%%%%% 
\begin{proof}[Proof of Proposition~\ref{Prop:two}] 
 Let $W$ be a subset of the fiber $C_t$ of $C$ over 
 $t\in\ell\smallsetminus\Delta$ whose sum $s(t)$ is an affine linear function of $t$. 
Note that local linearity in a neighborhood of some $t$ implies global linearity: in particular, an analytic
continuation  along any loop $\gamma\,\colon[0,1]\to\ell\smallsetminus\Delta$ with $\gamma(0)=\gamma(1)=t$
does not change  the value of $s(t)$. 
 
 Following points of $C_t$ along the loop above $\gamma$ gives a permutation of $C_t$. 
 By our assumption of general position 
 and~\cite[Lemma on page 111]{Arbarello-Cornalba-Griffiths-Harris}, 
 every permutation of $C_t$ is obtained by some loop $\gamma$. 
 
 Suppose that $W$ is a proper subset of $C_t$. 
 Then there is a point $u\in W$ and a point $v\in C_t\smallsetminus W$, hence $u\neq v$. 
 Let $\gamma$ be a loop in $\ell\smallsetminus\Delta$ based at $t$ whose permutation interchanges $u$ and 
 $v$ and fixes the other points of $C_t$. 
 In particular, $u(\gamma(1))=v$. 
 Since $s(t)=\sum_{w\in W} w(t)$ has the same value at the beginning and the end of the loop, we have  
\[ 
   \sum_{w\in W} w(\gamma(0))\ =\ \sum_{w\in W} w(\gamma(1))\,. 
\]  
Taking the difference gives $0=u(\gamma(1))-u(\gamma(0))$ so that $u=v$, a contradiction. 
\end{proof} 
%%%%%%%%%%%%%%%%%%%%%%%%%%%%%%%%%%%%%%%%%%%%%%%%%%%%%%%%%%%%%%%%%%%%%%%%%%%%%%%%% 
 
\begin{proof}[Proof of Proposition~\ref{prop:reduction-to-curve}] 
Note that projections $\pi_i\,\colon\PP^{n_1}\times \PP^{n_2}\to\PP^{n_i}$, for $i=1,2$, satisfy the assumptions
on the map $\phi$ in Theorem~\ref{thm:bertini}.  
Applying the theorem $m_i-1$ times for $\pi_i$, for $i=1,2$, gives the proof of the first part of the conclusion. 
 
The rest of the conclusion follows from the case analysis: in the case $\dim\pi_1(V')\geq 2$, one more application of
Theorem~\ref{thm:bertini} for the map $\pi_1$ proves the statement.  
\end{proof} 
 
%%%%%%%%%%%%%%%%% bibliography %%%%%%%%%%%%%%%%%%%%%%%%%%%%%%%%%%%%%%%% 
\providecommand{\bysame}{\leavevmode\hbox to3em{\hrulefill}\thinspace}
\providecommand{\MR}{\relax\ifhmode\unskip\space\fi MR }
% \MRhref is called by the amsart/book/proc definition of \MR.
\providecommand{\MRhref}[2]{%
  \href{http://www.ams.org/mathscinet-getitem?mr=#1}{#2}
}
\providecommand{\href}[2]{#2}

\end{document}